% tex.tex
%
% % This is based on an extension of Mittag-Leffler REPORT No. 18, 1992/93
% % and preprint Mathematics No. 6/1996, 1-8
%
% 2019-09-06 16:05 (Gunnar Taraldsen)
% 2019-09-06 16:05 (GT, zotero with BetterBib)

\documentclass{article}

\usepackage[utf8]{inputenc}

\input{def.tex}

%\vekk{
\title{The spectrum of a random operator is a random set}
\author{Gunnar Taraldsen\\
Norwegian University of Science and Technology}
%\date{July 2019}
\date{\today}
%}

\begin{document}

%\vekk{
\maketitle

\begin{abstract}
The theory of random sets is demonstrated to
prove useful for the theory of random operators.
A random operator is here defined by requiring the graph to be a random set.
It is proved that the spectrum and the set of eigenvalues of random operators are random sets.
These results seem to be a novelty even in the case of random bounded operators.
The main technical tools are given by the measurable selection theorem, 
the measurable projection theorem, 
and a characterisation of the spectrum by approximate
eigenvalues of the operator and the adjoint operator.
A discussion of some of the existing definitions of the concept of a
random operator is included at the end of the paper.

\vspace{1ex}

{\small
\noindent
%{\it AMS Subject Classification (1991):}{\bf 
%\ \  60H25 , 47H40, 60D05, 47A10, 82B44.} 
{\it AMS Subject Classification (2010):}{\bf 
  \ \
  47B80,
  26E25,
  35P05,
  60H25,
  60H05,
  82B44.
%  60H15
} 

%60-02 (28B20 52A22 60D05 60G70 90C15 91B70)

 \vspace{1ex}
 
\noindent 
{\it Keywords:} {\bf
 Random operators;
 Set-valued functions;
 General topics in linear spectral theory;  
 Random operators and equations;
 Stochastic integrals;
 Disordered systems
% Stochastic partial differential equations;
 
% Random sets;
% Spectral theory; 
%  Random Schr{\"o}dinger operator;
% Random operators and equations;
%  Measure theory;
 }
}

\end{abstract}
%}

\section{Introduction}

It will be assumed throughout that $(\Omega, \ceps, \pr)$ is a fixed underlying \Renyi space
which by definition is equivalent to a full conditional probability space as defined by
\citet[p.43]{RENYI}.
It will be convenient in the following to use the same symbol $\pr$ for both a \Renyi state, 
and also for a $\sigma$-finite measure $\pr$ that represents the \Renyi state.
A random quantity $X$ in a measurable space $(\Omega_X, \ceps_X)$
is a measurable mapping $X: \Omega \into \Omega_X$.
The law of $X$ is then defined by
$\pr_X (A) = \pr (X \in A) = \pr \{\omega \st X(\omega) \in A \}$
for all $A \in \ceps_X$.
The reader is hereby warned that the notation $(X \in A)$ is ambiguous.
It does not mean that $X$ is an element in $A$, but it denotes the event
$\{\omega \st X(\omega) \in A \}$.
This convention, and similar conventions for other
events determined by conditions on random quantities,
is used by \citet[p.22]{KOLMOGOROV} and other workers in probability \citep[p.11]{DOOB}.

If $\Theta$ is another random quantity, 
then a conditional \Renyi state $\pr^\theta_X$
is defined such that $\pr^\theta_X (A) = \pr (X \in A \st \Theta = \theta)$
as discussed by \citet{Taraldsen19conditional}.
This gives a basis for statistical inference for an unknown model parameter
$\theta$ given observed data $X$ including cases with 
random quantities such as
random vectors, random sets, random vector spaces, and random operators. 
The focus in the following will only be on the necessary prerequisite probability theory.  
% and the main technical challenge is often to prove the required measurability. 

Let $\Omega_X$ be the set of subsets of a topological space.
The space $\Omega_X$ is equipped with
the smallest $\sigma$-field $\ceps_X$
such that $\{F \in \Omega_X \st F \cap U \neq \emptyset\}$ is measurable
for all open sets $U$.
A random set $X$ is a measurable mapping $X: \Omega \into \Omega_X$.
The set $X_U = (X \cap U \neq \emptyset) = \{\omega \st X(\omega) \cap U \neq \emptyset \}$ is then
an event when $U$ is open.
A mapping $X: \Omega \into \Omega_X$ is a random set if and only if the closure $\overline{X}$ is a random set.
If $X$ is a singleton, then $X$ corresponds to a random element in $\Omega_X$.
Assume next that the topological space is a separable metric space,
and that $X$ is almost certainly non-empty and complete.
It follows then that
$X = \overline{\{Y_1, Y_2, \ldots \}}$ where the $Y_1, Y_2, \ldots$ are random elements in the metric space.
This measurable selection theorem is proved by \citet[p.67, Thm III.9]{CastaingValadier77}. 

Let $\Omega_X$ be the set of subspaces of a separable Hilbert space equipped with
the smallest $\sigma$-field $\ceps_X$ such that $\{F \in \Omega_X \st F \cap U \neq \emptyset\}$ is measurable
for all open sets $U$.
A random subspace $X$ in the Hilbert space is a measurable mapping $X: \Omega \into \Omega_X$.
A random subspace is hence a random set which is also a vector space.
Let $P (\omega)$ be a projection on a closed subspace $X(\omega)$ of a
separable Hilbert space for each $\omega \in \Omega$.
It follows from the measurable selection theorem that 
$P f$ is a random vector for all vectors $f$ if and only if $X$ is a random closed subspace.
The only if part follows since $P$ applied to a dense countable set gives a measurable selection.
The if part follows by constructing a random orthonormal basis from a dense measurable selection.

Let $\Omega_X$ be the set of linear operators from a separable Hilbert space into another separable Hilbert space.
A random operator $X$ is a mapping
$X: \Omega \into \Omega_X$ such that the graph $\Graph (X)$ is a random subspace.
The adjoint operator $X^*$ is a random closed operator since 
$\Graph (X^*) = (\Graph(-X)_{inv})^\perp$.
The claimed measurability follows from the previous characterisation of random closed subspaces
in terms of projections. 
The orthogonal complement operation $\perp$ above is defined relatively
to the random closed subspace
given by the product of the range Hilbert space
and the closure of the domain $\Dom (X)$.
This definition includes unbounded operators, and they need not be densely defined.

The measurable selection theorem applied to the graph $\Graph (X)$ of a random closed operator 
gives dense measurable selections for both the domain $\Dom (X)$
and range $\Ran (X)$ so they are random sets,
but they need not be random closed sets.
This gives a characterisation of random closed operators as infinite random matrix operators
given by a matrix of random complex numbers and two random orthonormal systems.
It follows from the closed graph theorem that a random closed operator
$X$ is a random bounded operator if and only if
the domain $\Dom (X)$ is a random closed subspace.
Random selfadjoint operators are characterised by randomness of their resolvent, 
and then equivalently in terms of their spectral families.
The latter generalises to a characterisation of random normal operators.

\section{The spectrum is a random set}

The results in the previous section, and in particular the proof of the randomness of the
adjoint, relied on the measurable selection theorem.
The results to follow depend on the measurable projection theorem:
Let $E$ be measurable in the product space of $\Omega$ with a complete separable metric space.
The projection of $E$ on $\Omega$ is an event if $\Omega$ is complete.
The latter means that all subsets of zero sets are also events.
A zero set $A$ is an event such that $\pr (A)$ = 0.

\citet[p.75, Thm III.23]{CastaingValadier77}  prove the measurable projection theorem
for the more general case of the
product of a complete probability space with a Suslin space: 
A Hausdorff space that is also the continuous image of a Polish space.
A consequence is that $F_E = (F \cap E \neq \emptyset)$ is an event for
any measurable set $E$ when $F$ is
a random closed set \citep[p.80, Thm III.30]{CastaingValadier77}.
Our stated generalisation to a
complete \Renyi space $\Omega$ follows since $\pr$ is equivalent to a probability measure in the
sense of having the same family of zero sets.

\begin{theo}
  \label{theo1}
The eigenvalue spectrum and the spectrum of a random closed operator in
a separable Hilbert space are random sets if the underlying \Renyi space is complete.
If, furthermore, the random closed operator is unitarily invariant with respect 
to a metrically transitive system of measurable transformations acting on the \Renyi space,
then both the closure of the eigenvalue spectrum and the spectrum are represented by fixed sets.
\end{theo}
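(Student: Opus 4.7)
The plan is to verify the random-set property by showing that for every open $U\subset\mathbb{C}$ the events $\{\omega:\sigma_p(X(\omega))\cap U\neq\emptyset\}$ and $\{\omega:\sigma(X(\omega))\cap U\neq\emptyset\}$ lie in $\ceps$, and then use ergodicity to convert the two random sets into fixed ones. For the eigenvalue spectrum I would exploit that $\Graph(X)$ is a random closed subspace of $H\times H$, so that the joint set $\{(\omega,h):h\in\Graph(X)(\omega)\}\subset\Omega\times(H\times H)$ is measurable (the standard graph theorem for random closed sets in Polish spaces). Pulling this back along the continuous map $(\omega,\lambda,f)\mapsto(\omega,f,\lambda f)$ and intersecting with $\{f\neq 0\}\cap\{\lambda\in U\}$ produces a measurable set $E_U\subset\Omega\times U\times H$. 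Its projection onto $\Omega$ coincides with $\{\sigma_p(X)\cap U\neq\emptyset\}$, and by completeness of the \Renyi space together with the Castaing--Valadier measurable projection theorem quoted in the excerpt this projection is an event.

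For the full spectrum I would use the characterisation $\sigma(X)=\sigma_{\mathrm{app}}(X)\cup\{\lambda:\bar\lambda\in\sigma_{\mathrm{app}}(X^*)\}$ alluded to in the abstract. Since $X^*$ is a random closed operator by the construction in the previous section, and since complex conjugation of sets and finite unions preserve the random-set property, it suffices to show that the approximate point spectrum $\sigma_{\mathrm{app}}(X)$ is a random set. Let $(F_n,G_n)_{n\geq 1}$ be a measurable dense selection in $\Graph(X)$ provided by the measurable selection theorem. The normalised selections $(F_n/\|F_n\|,G_n/\|F_n\|)$, defined where $F_n\neq 0$, are dense in the unit-norm slice of the graph, so that
\[
d(\omega,\lambda)=\inf_{n:\,F_n(\omega)\neq 0}\left\|\frac{G_n(\omega)-\lambda\,F_n(\omega)}{\|F_n(\omega)\|}\right\|
\]
is jointly measurable in $(\omega,\lambda)$ and vanishes exactly when $\lambda\in\sigma_{\mathrm{app}}(X(\omega))$. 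Projecting $\{(\omega,\lambda)\in\Omega\times U:d(\omega,\lambda)=0\}$ onto $\Omega$ via the measurable projection theorem then yields the desired event. The main technical obstacle lies here: one has to verify carefully that the normalised selection is genuinely dense in the unit-norm slice (a short continuity argument away from zero) and to handle degenerate fibres where $\Dom(X(\omega))$ contains no unit vector.

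For the second assertion, unitary invariance in the form $X(\tau\omega)=U_\tau X(\omega)U_\tau^{-1}$ for each transformation $\tau$ in the system implies $\sigma(X(\tau\omega))=\sigma(X(\omega))$ and $\sigma_p(X(\tau\omega))=\sigma_p(X(\omega))$. Hence for every open $U$ the events $\{\sigma(X)\cap U\neq\emptyset\}$ and $\{\sigma_p(X)\cap U\neq\emptyset\}$ are invariant under the entire system and, by metric transitivity, have probability $0$ or $1$. Fix a countable topological base $\{U_n\}$ of $\mathbb{C}$, write $N=\{n:\pr(\sigma(X)\cap U_n\neq\emptyset)=1\}$, and define the deterministic closed set
\[
S=\{\lambda\in\mathbb{C}:\text{every }U_n\text{ containing }\lambda\text{ lies in }N\}.
\]
On the full-measure set $\Omega_0=\bigcap_{n\in N}\{\sigma(X)\cap U_n\neq\emptyset\}\cap\bigcap_{n\notin N}\{\sigma(X)\cap U_n=\emptyset\}$, the meeting pattern of the closed set $\sigma(X(\omega))$ with the base is completely determined, forcing $\sigma(X(\omega))=S$. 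Applying the same argument to $\overline{\sigma_p(X)}$, whose meeting pattern with open sets coincides with that of $\sigma_p(X)$, produces the desired fixed set for the closure of the eigenvalue spectrum.
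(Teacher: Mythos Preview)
Your proposal is correct and follows the paper's overall strategy: treat the eigenvalue spectrum via the measurable projection theorem applied to the graph, reduce the full spectrum to the approximate point spectrum of $X$ and of $X^*$, handle the approximate point spectrum through a dense measurable selection $(f_n,g_n)$ in $\Graph(X)$, and obtain non-randomness from a $0$--$1$ argument over a countable base. Your formula $d(\omega,\lambda)=0$ is exactly the paper's description $\bigcap_{N\ge 1}\bigcup_n\{(\omega,z):\|zf_n(\omega)-g_n(\omega)\|<\|f_n(\omega)\|/N\}$ rewritten, and the paper's unnormalised inequality sidesteps the density check for the unit-norm slice that you flag as a technical obstacle.

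Two packaging differences are worth noting. For the eigenvalue spectrum the paper avoids the extra factor $U\times H$ entirely: it observes that $(\sigma_p(X)\cap U\neq\emptyset)=(\Graph(X)\cap E\neq\emptyset)$ for the single Borel set $E=\{(f,g)\in H\times H:\|f\|=1,\ |\langle f,g\rangle|=\|g\|,\ \langle f,g\rangle\in U\}$, and then invokes the consequence of the projection theorem that $F_E$ is an event whenever $F$ is a random closed set and $E$ is measurable. This is a bit slicker than your pullback-and-project argument, though both are valid. For the ergodic part the paper simply cites an abstract result (closed subsets of $\mathbb{C}$ are separated by a countable family of measurable sets, hence invariant random closed sets are a.s.\ constant), whereas you spell out the construction of the deterministic set $S$ from the $0$--$1$ pattern on a countable base; your version is more self-contained.
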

\begin{proof}
  Let $\Sigma$ be the eigenvalue spectrum, let $G$ be the operator graph,
  and let $U$ be an open set in the complex plane.
It must be proven that $\Sigma_U = (\Sigma \cap U \neq \emptyset)$ is an event.
Observe first that $\Sigma_U = G_E$ with $E = \{(f, z f) \st \norm{f} = 1, z \in U \}$,
so it is sufficient to prove that $E$ is measurable.
The latter follows since 
$E$ is the set of pairs $(f,g)$ such that 
$\abs{\langle f,g \rangle} = \norm{f}\norm{g}$, $\norm{f} = 1$, and $\langle f,g \rangle \in U$.

The spectrum is the union of the approximate
spectrum and the complex conjugate of the approximate
spectrum of the adjoint operator.
The adjoint is a random operator and the union of two random sets is a random set, 
so it is sufficient to prove that the approximate spectrum is a random set.
The approximate spectrum is
$\Sigma = \{z \st \forall \epsilon > 0, \exists (f,g) \in G, \norm{f} = 1, \norm{z f - g} < \epsilon\}$.
The projection theorem gives that it is sufficient to prove that
$\{(\omega, z) \st z \in \Sigma (\omega)\}$ is measurable.
It is since it equals
$\cap_{N \ge 1} \cup_n \{(\omega, z) \st  \norm{z f_n (\omega) - g_n (\omega)} < \norm{f_n (\omega)}/N \}$
where $\{(f_n,g_n)\}$ is a dense measurable selection for $G$.

\vekk{
Now $\Sigma_U = \cup_{z \in U} G_{E(z)}$ with 
$E (z) = \cap_{n \ge 1} \{(f, g) \st \norm{f} = 1, \norm{z f - g} \le 1/n \}$.
The set $\Sigma_U$ is then an event since it is the projection of the jointly measurable set
$\{(\omega, z) \st G(\omega) \cap E(z) \neq \emptyset, z \in U\}$.
}

Both the spectrum and the eigenvalue spectrum are unitarily invariant.
The set of closed sets are separated by a countable family of measurable sets and the claimed existence of
fixed sets representing the spectra follows \citep[p.77]{Taraldsen92}.

\end{proof}

The above self-averaging of spectra for metrically transitive operators
is a generalization of similar results for random self-adjoint operators
obtained previously by \citet{Pastur73spectra,Pastur89spectral} and
\citet{KirschMartinelli82ergodic}.
The randomness of the spectra was proved by \citet{Taraldsen93ml},
but with a slightly more complicated argument.
The result for the spectrum holds in any Hilbert space if it is
assumed that there is a countable dense selection for the graph $G$.

The reader may wonder why the results are stated for an underlying \Renyi space
since the proofs are essentially unchanged from the more common assumption
of an underlying probability space.
The reason is that unbounded measures appear naturally in statistics
\citep{TaraldsenLindqvist13fidopt},
%\citep{TaraldsenLindqvist10ImproperPriors},
and they also appear naturally in the case of dynamic systems.
The latter is documented by for instance
\citet{BoshernitzanDamanikFillmanLukic19ergodic} who have results
on ergodic random self-adjoint operators for the case where the underlying $\pr$
is $\sigma$-finite.
\citet{OrlovSakbaevSmolyanov2016UnboundedRandomOperators} exemplify
that another possible generalization is given by replacing the
probability measure by a finitely additive normalized measure.
\citet{BieDruilhetSaintBie2019ImproperVsFinitely} show certain defects
related to use of finitely additive measures in statistics,
but the work of \citet{DEFINETTI} demonstrates for ever
the importance of considering also finitely additive measures.

% \section{Discussion}
\section{What is a random operator?}

\citet[p.1]{SKOROHOD} defines
\be{S1}
(A f)(t) = \int_0^t f(s) \, B(ds)
\ee
for $f \in \Hi = L^2 [0,1]$.
The integral is the stochastic integral
with respect to Brownian motion $B$.
This defines a stochastic process $A$ indexed by
$\Hi$ with values in $\Hi$ so that
$A (\alpha f + \beta g) = $
$\alpha A f + \beta A g$ holds almost surely for all
$\alpha, \beta \in \CompN$ and $f, g \in \Hi$.
Furthermore, $f_n \rightarrow f$ implies
$A f_n \rightarrow A f$ in probability.
Altogether, this shows that $A$ is a
strong random operator in the sense of
\citet[p.3]{SKOROHOD}.
\footnote{He uses a real Hilbert space, but it is convenient here to use a complex Hilbert space.}
It is important to note that the
null event in the above linearity demand depends on
the involved vectors and scalars.
It is hence not generally so that there exists an operator $A (\omega)$
for almost all $\omega$.

If, additionally,
there exists a random variable $K$ such that
$\norm{A f} \le K \norm{f}$ almost surely,
then $A$ is said to be a bounded
strong random operator.
\citet[p.7-8]{SKOROHOD} proves that the set of
bounded strong random operators can be identified with
the set of random bounded operators on $\Hi$.
The operator $A$ defined in equation~(\ref{eqS1})
is, however, not bounded, and demonstrates
that the set of random bounded operators is a strict subset
of the set of strong random operators.
This is related to the fact that the stochastic integral is
not given by a random integral with respect to a random
measure \citep[p.1]{Kallenberg2017RandomMeasuresTheory}.
\vekk{
\citet[p.19]{Kallenberg2005ProbabilisticSymmetriesInvariance} defines a continuous linear random
functional to be an abstract version of a stochastic integral.
The definition coincides\footnote{Kallenberg uses a real Hilbert space.}
with the concept of a strong random operator from $\Hi_1$ to $\Hi_2$
for the case where $\Hi_2 = \CompN$.
}

\vekk{In the following it will hence be natural to
introduce the term ``stochastic operator'' as a short form
for the term ``strong random operator in the sense of Skorohod''.}

\citet[p.3]{SKOROHOD} defines
also the more general concept of a
weak random operator $A$ as a continuous (in probability) stochastic process indexed by
$\Hi \times \Hi$ with values in $\CompN$ so that
$(f,g) \mapsto (f, A g)$ is almost surely a sesquilinear form \citep[p.1]{WEIDMANN}.
A strong random operator is a weak random operator.
The strong random operators are characterized as the weak random operators
with Fourier coefficients $(e_i,A g)$ in $l^2$ \citep[p.4]{SKOROHOD}. 
In this case a strong random operator is determined from a weak random operator
by defining $A g = \sum_i e_i (e_i, A g)$ given an orthonormal basis $\{e_i\}$.

Let $\Gamma$ be the Poisson point process on $\RealN^3$,
and let $\alpha$ be i.i.d. (non-zero) random variables independent of $\Gamma$.
\citet[Thm 7, p.6]{KaminagaMineNakano2019SelfadjointnessCriterionSchr} prove that
\be{RanPoisson}
H = -\Delta + \sum_{x \in \Gamma} \alpha(x) \delta_x
\ee
is a random selfadjoint operator on $\Hi = L^2 (\RealN^3)$,
and that the spectrum is almost surely equal to the real line. 
This beautiful result exemplify that the theory of
random Schr{\"o}dinger operators is still a most active field of research
\citep{DolaiKrishnaMallick2019RegularityDensityStates,STOLLMANN,Kirsch07invitation}. 
Many more examples of random Schr{\"o}dinger operators
are presented by \citet{CARMONA_LACROIX}
and \citet{PASTUR_FIGOTIN}.
These examples are all special cases of the definition used in Theorem~\ref{theo1},
but it is unclear in each specific case if they also can be seen
as weak random operators in the sense of \citet{SKOROHOD}. 
The particular case $H = -\Delta$ obtained by $\alpha = 0$
gives trivially a random selfadjoint operator,
but it is not a weak random operator since
the domain $D(H) \neq \Hi$.

\citet[p.30-32]{SKOROHOD} constructs a random selfadjoint operator
$\overline{A}$ from a
symmetric weakly random operator $A$ fulfilling
$\sum_j \abs{(e_j, A e_i)}^2 < \infty$.
In this case
$A e_j$ is defined for the particular orthonormal basis $\{e_j\}$.
\citet[p.36]{SKOROHOD} notes that this condition
holds in particular for any strong symmetric operator.
The construction gives a dense fixed subspace $D$ with
$<f, \overline{A} g> = (f,A g)$ for all $f,g \in D \subset D(\overline{A})$.
The result is a random selfadjoint operator with a fixed core.
This corresponds to the special
case of a random matrix operator defined from a fixed
orthonormal basis.
\citet[p.40-48]{SKOROHOD} uses
the corresponding spectral families to obtain
solutions to certain evolution equations, Schr{\"o}dinger equations,
and Fredholm type equations.
Furthermore, 
\citet[p.48-60]{SKOROHOD} generalizes the construction to include equations
with semi-bounded operators that are not necessarily symmetric.

% It seems that the construction also can be used in reverse to obtain
% a weakly random operator starting from a random selfadjoint operator
% with a fixed core $D$.
More generally, it seems that the constructions can be used with a random orthonormal basis
to give a characterization of the weakly random symmetric operators
that have a corresponding selfadjoint operator.
This claim, and corresponding claims for semi-bounded operators, 
is left for future investigations.
An alternative strategy is to develop spectral theory from scratch
for weakly random operators.
The initial ingredient is given by the definition of products
of weakly random operators as formulated by
\citet[p.10]{SKOROHOD}, but possibly generalized by allowing
use of a random orthonormal basis.

\citet{ThangQuy17spectral}
presents results on strongly random operators and bounded strongly random operators
between and on separable
Banach spaces.\footnote{\citet[Def. 2.1]{ThangQuy17spectral}
  use the term random operator in stead of the term strongly random operator.}
It is in particular shown that a bounded strongly random operator
can be extended in the natural way to a continuous
linear operator from the set of Banach valued random variables to the
set of Banach valued random operators equipped with the topology from
convergence in probability.
\citet{ThangQuy17spectral} 
prove versions of the spectral theorem for bounded selfadjoint, and more generally
bounded normal  strongly random operators.
This supplement the results obtained by
\citet[p.40-48]{SKOROHOD}.

\citet{Hackenbroch09spectralTheory} generalizes the concept of
a strong random operator by replacing the index Hilbert space
by a fixed dense subspace $D$.
This includes then random operators with a common core,
and the Laplace operator $-\Delta$ is then trivially included in
the class of Hackenbrock random operators. 
Furthermore, \citet[Thm 1]{Hackenbroch09spectralTheory}
proves that the operators with a densely defined adjoint
have a unique closed extension with a a measurable selection.
This includes in particular symmetric operators.
The resulting random closed operator is a special case
of the closed operators covered by Theorem~\ref{theo1} since
the random operator has a fixed non-random core.
\citet{Gaspar18ranNormal} provides further links between different
concepts of random operators including normal Hackenbrock random operators and associated
random spectral families.
The classes of random operators considered are all restricted
by assuming that there is a dense nonrandom subspace $D$,
but the results are important generalizations of 
the results on selfadjoint extensions obtained by \citet[p.40-48]{SKOROHOD}.

% . [7, Theorem XIII. 85]):
% Sjekk Reed-Simon IV: Vises målbarhet til spekter??

Measurable fields of operators, and corresponding direct integrals, 
are fundamental in the theory of von Neumann algebras \citep{DIXMIER}.
In this context, the Hilbert space, including the inner product, may depend on $\omega$,
in addition to the operators dependence on $\omega$.
\citet[Lemma 12.1.3, p.332]{Schmudgen1990UnboundedOperatorAlgebras} provides
a measurable selection theorem for the graph of a
measurable field of closed operators.
This provides hence a generalization of the definition of a
random closed operator as used in Theorem~\ref{theo1}.
Inspection of the proof shows that its conclusions hold also in this more general case.
Another result is a direct integral of a random closed operator.
In the case of metrically transitive operators this gives a tool
for the determination of the spectrum, but this will not be investigated further here.
This, and more, give generalizations of results for
random Schr{\"o}dinger operators \citep{Kotani85support}, and random selfadjoint operators.
An explicit formula for the nonrandom spectrum of a metrically
transitive random normal operator from the topological
support of the operator is proved by \citet[p.84]{Taraldsen92}.

Let $\ran{D}$ be a mapping of the probability space into
the set of subsets of $\Hi_1$ such that 
$\ran{D}_{\{f\}} \dlik (f \in \ran{D})$ is
an event for all $f \in \Hi_1$.
A mapping 
$\ran{T} : \{(\omega,f) \st f \in \ran{D} (\omega)\} \into \Hi_2$ 
is a weakly $G$-random operator in the sense of
\citet{Zhdanok90invRandom}
if $G (\ran{T})$ is a random set,
and linear if $\ran{T} (\omega): \ran{D} (\omega) \into \Hi_2$
is linear for all $\omega$.
If the measurability demand on  $\ran{D}$ is removed, 
then $\ran{T}$ is a random operator in the sense of Theorem~\ref{theo1}.
We arrived at our definition independently of
the work of Zhdanok, 
and motivated by different applications.
The definitions are nonetheless similar, 
and this seems to indicate the naturalness of the definitions.
Use of the theory of random sets is a key ingredient
also in the arguments of \citet{Zhdanok90invRandom}.

Zhdanok obtains a Hille-Yosida theorem for the random generator
of a random strongly continuous semigroup.
This is related to the results of \citet[p.48-60]{SKOROHOD} on
semi-bounded operators that are not necessarily symmetric.
\citet{OrlovSakbaevSmolyanov2016UnboundedRandomOperators}
study random one-parameter semigroups using the Trotter-Lie product formula,
and introduce in particular a definition for the expectation of
the corresponding random unbounded generator.
This, and other examples indicated above,
demonstrate then the need for a theory
for random closed operators which are not necessarily selfadjoint nor normal.

\section{Conclusion}

It follows from the previous brief discussion that
there are many different definitions of the concept of a random operator,
and there are recent progress on relations between them.
Different applications require different concepts.
Applications of weak and strong random operators are well documented
by the books by \citet{SKOROHOD} and \citet{BHARUCHA_REID},
and later works referring to these two fundamental contributions. 
Applications involving the theory of random closed operators,
and in particular the theory of random Schr{\"o}dinger operators,
are similarly documented by the seminal works of
\citet{PASTUR_FIGOTIN} and
\citet{CARMONA_LACROIX}.

A theory based on the theory of random sets has been used here
based on initial work by \citet{Taraldsen92}.
The definition of a random operator used here was initially
inspired by the gap topology of \citet{KATO} based on the graph.
Links to other approaches have been indicated.
Further progress is likely to follow using recent results from
the theory of random sets \citep{Molchanov2017TheoryRandomSets},
and random measures \citep{Kallenberg2017RandomMeasuresTheory}.
The key result presented here is that the spectrum of a random closed operator
is in fact a random set in the sense
defined and studied originally by \citet{MATHERON}.
The results presented are, hopefully, convincing demonstrations of the
applicability of the theory of random sets to the theory of random operators.
\vekk{
It is also natural to consider topologies for closed sets, then also for closed operators,
and for random operators.
\citet{Taraldsen92} provides some initial results in this direction.
It is in particular demonstrated that strong resolvent convergence of selfadjoint operators is
equivalent with a corresponding natural topology for closed sets.
}

\bibliographystyle{chicago}
% \bibliography{bib,main,references}
% \bibliography{bib,gtaralds,zotero}
% \bibliography{zotero,bib,gtaralds}
% \bibliography{tex,zotero,gtaralds}
\bibliography{tex}
%\bibliography{zotero}
%\bibliography{MyLibrary}

\end{document}